\documentclass{amsart}

\usepackage{amsmath, amssymb,graphicx,mathrsfs,enumerate}
\usepackage[all]{xy}

\setlength{\parskip}{2mm}

\usepackage{latexsym}

\newtheorem{proposition}{Proposition}[section]
\newtheorem{lemma}[proposition]{Lemma}
\newtheorem{corollary}[proposition]{Corollary}
\newtheorem{theorem}[proposition]{Theorem}

\newcommand{\cp}{\overline{c}_p}

\begin{document}

\title{On the orders of composition factors in completely reducible groups}

\author{Attila Mar\'oti}
\address{Alfr\'ed R\'enyi Institute of Mathematics, Re\'altanoda utca 13-15, H-1053, Budapest, Hungary}
\email{maroti.attila@renyi.hu}

\author{Saveliy V. Skresanov}
\address{Alfr\'ed R\'enyi Institute of Mathematics, Re\'altanoda utca 13-15, H-1053, Budapest, Hungary}
\email{skresanov.savelii@renyi.hu}

\keywords{simple group of Lie type, composition factor, completely reducible, orbital graph}
\subjclass[2020]{20C33, 20E34}
\thanks{The project leading to this application has received funding from the European Research Council (ERC) under the European Union's Horizon 2020
research and innovation programme (grant agreement No 741420). The first author was also supported by the National
Research, Development and Innovation Office (NKFIH) Grant No.~K138596, No.~K132951 and Grant No.~K138828.}

\begin{abstract}
	We obtain an asymptotic upper bound for the product of the $p$-parts of the
	orders of certain composition factors of a finite group acting
	completely reducibly and faithfully on a finite vector space of order
	divisible by a prime $p$. An application is given for the diameter of a
	nondiagonal orbital graph of an affine primitive permutation group.    	
\end{abstract}
\maketitle

\section{Introduction}

The composition length $s(G)$ of a finite group $G$ is the length of any composition series of $G$.
Obtaining bounds for this invariant has been an important area of study in finite group theory.
For instance, Glasby, Praeger, Rosa, Verret~\cite{GPRV} proved that if $G$ is a permutation group of degree $d$ with $r$ orbits, then $s(G) \leq \frac{4}{3} (d-r)$.
In the special case when $G$ is primitive, they~\cite{GPRV} gave a logarithmic bound in $d$ for $s(G)$, namely $s(G) \leq \frac{8}{3} \log_{2}d - \frac{4}{3}$. 

A finite primitive permutation group \( G \) is affine if it has an abelian minimal normal subgroup~\( V \).
The group \( G \) decomposes into a semidirect product \( HV \), where \( H \) is a point stabilizer in \( G \),
moreover, \( H \cap V = 1 \) and the vector space \( V \) may be viewed as an irreducible \( H \)-module. More generally, let $V$ have dimension $n$ over the finite field $\mathbb{F}_{q}$ of size $q$ and let $V$ be a completely reducible, faithful
$\mathbb{F}_{q}H$-module for a finite group $H$. Glasby, Praeger, Rosa, Verret~\cite{GPRV} and Holt, Tracey \cite{HT} gave sharp upper bounds for $s(H)$ of the form $C n \log q$ for explicit constants $C$.
 
The bound can be made more precise if one focuses on cyclic composition factors only.
We continue to assume that $V$ is a completely reducible, faithful $\mathbb{F}_{q}H$-module for a
finite group $H$ with $|V| = q^{n}$ for $q = p^{f}$ with $p$ a prime and $n$, $f$ are integers. Let
$r$ be the number of irreducible summands of $V$. Giudici, Glasby, Li, Verret~\cite{GGLV} proved
that the number of composition factors of $H$ of order $p$ is at most $\frac{\epsilon_{q} n - r}{p-1}$
where $\epsilon_{q}$ is $\frac{4}{3}$ if $p = 2$ and $f$ is even, is $\frac{p}{p-1}$ if $p$
is a Fermat prime (a prime of the form $2^{2^{k}} + 1$ for some integer $k \geq 0$), and is $1$ otherwise.

Some results bound products of orders of special kinds of composition factors in a composition series of a finite group.
For example, Guralnick, the first author, and Pyber~\cite{GMP} investigate products of the orders of abelian or nonabelian
composition factors of a finite group and use these results to classify
primitive permutation groups $A$ and $G$ of degree $d$ with $G$ normal in $A$ and $|A/G| \geq d$.   

In this paper we will also present a bound on the product of the orders of certain composition factors.
For a prime \( p \) and an integer \( N \) let \( v_p(N) \) denote the largest \( k \) such that \( p^k \) divides~\( N \).
Given a finite group \( G \) with composition series \( 1 = G_0 < G_1 < \dots < G_m = G \),
let \( c_p(G) \) be the sum of \( v_p(|G_i/G_{i-1}|) \) for such \( i \in \{ 1, \dots, m \} \) that
\( G_i/G_{i-1} \) is not isomorphic to a finite simple group of Lie type in characteristic~\( p \).
By the Jordan--H\"older theorem \( c_p(G) \) does not depend on the choice of the composition series, so it is an invariant of~\( G \).
Notice that if the group does not contain composition factors isomorphic to finite simple groups of Lie type in characteristic~\( p \)
(if, for instance, the group is \( p \)-solvable), then \( c_p(G) \) is equal to \( v_p(|G|) \).

The following may be viewed as an asymptotic extension of the main theorem of Giudici, Glasby, Li, Verret~\cite{GGLV}.

\begin{theorem}\label{main}
	There exists a universal constant \( C \) such that the following holds.
	Let $q$ be a power of a prime $p$ and let $V$ be a finite vector space of
	dimension $n$ over the field of size $q$. If $H$ is a subgroup of $\mathrm{GL}(V)$
	acting completely reducibly with \( r \) irreducible summands, then
	$$ c_{p}(H) \leq C \cdot \frac{n - r}{p-1}. $$
\end{theorem}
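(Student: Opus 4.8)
Before the proof we sketch the strategy. The argument rests on three elementary properties of $c_p$: if $N \trianglelefteq G$ then $c_p(G) = c_p(N) + c_p(G/N)$ (Jordan--Hölder); if $N$ is subnormal in $G$ then $c_p(N) \le c_p(G)$; and $c_p(G) \le v_p(|G|)$ for every finite group $G$. From these one deduces a subadditivity lemma: if $H$ is a subdirect product of $H_1, \dots, H_s$ (so $H \le H_1 \times \cdots \times H_s$ with every coordinate projection onto), then $c_p(H) \le \sum_{j} c_p(H_j)$ --- the kernel of the first projection is normal in $H$, each of its further projections is normal in the corresponding $H_j$, and one recurses, peeling off one coordinate at a time via subnormal monotonicity. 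It bears emphasising that $c_p$ is \emph{not} monotone under passing to an arbitrary subgroup (it jumps from $0$ on $\mathrm{SL}_n(p)$ to $\binom{n}{2}$ on a Borel subgroup), which is why every reduction below is routed through normal subgroups and quotients.

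First we reduce to the absolutely irreducible case. Decomposing $V$ into its $H$-homogeneous components $U_1, \dots, U_s$ and letting $H_j \le \mathrm{GL}(U_j)$ be the image of $H$, subadditivity gives $c_p(H) \le \sum_j c_p(H_j)$, while $n - r = \sum_j(\dim U_j - r_j)$, $r_j$ being the number of irreducible summands in $U_j$. Inside a homogeneous $U_j \cong W^{\oplus m}$, the double centraliser theorem embeds $H_j$ absolutely irreducibly into $\mathrm{GL}_{d_0}(q^e)$ where $q^e = |\mathrm{End}_{\mathbb{F}_q H_j}(W)|$ and $d_0 e = \dim_{\mathbb{F}_q} W$; since $\dim U_j - r_j = (d_0 e - 1)m \ge d_0 - 1$, it suffices to bound $c_p(H_j) \le C(d_0 - 1)/(p-1)$. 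So we must prove: for irreducible $H \le \mathrm{GL}_d(q)$ over any finite field of characteristic $p$, $c_p(H) \le C(d-1)/(p-1)$. This we do by induction on $d$. We may assume $H$ absolutely irreducible, since otherwise $H$ embeds absolutely irreducibly in $\mathrm{GL}_{d/e}(q^e)$ with $e \ge 2$ and the inductive hypothesis finishes. If $H$ is imprimitive, take a system of imprimitivity $V = V_1 \oplus \cdots \oplus V_t$ into $t \ge 2$ blocks of minimal dimension; irreducibility of $V$ over $H$ forces each $V_i$ to be irreducible over the block kernel $N \trianglelefteq H$, so $N$ is a subdirect product of irreducible groups of degree $d/t < d$ and, by the inductive hypothesis and subadditivity, $c_p(N) \le C(d - t)/(p-1)$. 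Since $c_p(H/N) \le v_p(|S_t|) = v_p(t!) = (t - s_p(t))/(p-1) \le (t-1)/(p-1)$ by Legendre's formula, adding yields the bound as soon as $C \ge 1$.

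Now let $H$ be absolutely irreducible and primitive. The standard structure theory of primitive linear groups reduces this --- using induction on $d$ in the tensor-product, tensor-induced, extension-field and subfield configurations, whose arithmetic all closes up with an absolute constant (each replaces $d$ by a proper divisor, or by the same $d$ over a smaller field, discarding only a cyclic $p'$-group or a group of bounded order) --- to the case that $H$ is almost simple modulo scalars: $S \le H/Z \le \mathrm{Aut}(S)$ for a nonabelian simple $S$ whose quasisimple preimage acts projectively absolutely irreducibly on $V$, so $d \ge R_p(S)$, the least degree of a faithful projective representation of $S$ in characteristic $p$. The scalar group $Z$ is a $p'$-group (a semisimple $p$-element in characteristic $p$ is trivial), hence
\[
c_p(H) \le \delta_S\, v_p(|S|) + v_p(|\mathrm{Out}(S)|),
\]
$\delta_S$ being $1$ unless $S$ is of Lie type in characteristic $p$. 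The key input from the classification is the inequality $\delta_S\, v_p(|S|) \le C'(R_p(S) - 1)/(p-1)$ for an absolute $C'$: it is vacuous if $S$ is of Lie type in characteristic $p$; for $S = A_k$ it follows from $v_p(|A_k|) = v_p(k!) \le (k-1)/(p-1)$ together with $R_p(A_k) \ge k - 2$ (for all but finitely many $k$); for sporadic $S$ it is a finite check; and for $S$ of Lie type in characteristic $\ell \ne p$ it follows from the Landazuri--Seitz--Zalesskii bounds, whereby $R_p(S)$ grows exponentially in the Lie rank and with the defining field while $v_p(|S|)$ is only polynomial in the rank and logarithmic in the field size. For a primitive $H$ with several components the same estimates apply component by component, the tensor factor dimensions multiplying while the telescoping inequality $\sum_i(R_p(S_i) - 1) \le \prod_i R_p(S_i) - 1 \le d - 1$ reassembles the individual bounds; the permutation action on a set of $m$ isomorphic components contributes $v_p(m!) \le (m-1)/(p-1)$ against a tensor factor of dimension at least $2^m$.

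The step I expect to be the main obstacle is the handling of $v_p(|\mathrm{Out}(S)|)$ when $S$ is of Lie type in characteristic $p$ --- that is, of the cyclic $p$-factors contributed by \emph{field automorphisms}, which $c_p$ counts even though $S$ itself is not counted. One must show these cannot occur unless the module is correspondingly large: if $H$ induces on a component defined over $\mathbb{F}_{p^a}$ field (or graph-field) automorphisms whose order has $p$-part $p^j$, then $p^j \mid a$, the Frobenius twists of the associated tensor factor occurring in the ensuing orbit are pairwise inequivalent, and hence that component contributes to $d$ a factor divisible by $p^j$, forcing $(d-1)/(p-1) \ge (2p^j - 1)/(p-1) \ge p^{j-1} \ge j$ --- which absorbs the contribution of those automorphisms (at most $j$, plus a bounded amount from diagonal and graph automorphisms). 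Making this uniform across all families of simple groups and all admissible configurations of diagonal, field, graph and graph-field automorphisms, and keeping straight throughout the relation between the defining field of each component and the field $\mathbb{F}_q$ over which $V$ is realised, is where essentially all the technical work resides; the remainder is assembled from the three properties of $c_p$ above and the single displayed simple-group inequality.
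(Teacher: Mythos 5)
Your high-level skeleton (reduce to absolutely irreducible, then primitive, then essentially-simple, then invoke a lower bound on $R_p(S)$ against $v_p(|S|)$) runs parallel to the paper's, and your central simple-group inequality $\delta_S\, v_p(|S|) \leq C'(R_p(S)-1)/(p-1)$ is exactly what the paper proves (as Proposition~\ref{simpbound}, via Artin's bound rather than Landazuri--Seitz directly, but the content is the same). Your reduction to irreducible via homogeneous components and a subdirect-product subadditivity lemma is a valid and slightly tidier alternative to the paper's Step~1 (peeling off one irreducible summand and its centralizer). One small slip: in the imprimitive reduction, minimality of block dimension only forces each block to be $N$-\emph{homogeneous}, not $N$-irreducible (Clifford for $N \trianglelefteq \mathrm{Stab}_H(V_1)$); this is harmless, since your own argument already handles completely reducible inputs, but the claim as written is false.

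The genuine divergence --- and the gap --- is in how the abelian cyclic factors of order $p$ are handled. The paper deliberately \emph{does not} try to bound $c_p$ directly in the recursion. It introduces $\cp$, which counts only nonabelian non-Lie-type-in-char-$p$ composition factors, and proves a bound for $\cp$ by a minimal-counterexample argument. Because $\cp$ is blind to abelian factors, the paper can pass to the derived subgroup (Step~2 of Proposition~\ref{nonabelian}) and eventually land on a \emph{perfect} hence quasisimple $H$; the outer automorphism group --- in particular, $p$-power field automorphisms of Lie-type groups in characteristic $p$ --- never enters. The abelian $p$-contribution is then supplied wholesale by the Giudici--Glasby--Li--Verret theorem, and the two bounds are simply added at the end. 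You instead keep $c_p$ intact through the recursion, which blocks the passage to the derived subgroup (since $c_p$, unlike $\cp$, changes under it), and this is precisely why you end up facing almost simple modulo scalars rather than quasisimple, and must account for $v_p(|\mathrm{Out}(S)|)$ when $S$ is of Lie type in characteristic $p$. You correctly identify this as the crux and sketch a Frobenius-twist argument, but that sketch is incomplete: the twists of the tensor factor need not be pairwise inequivalent (the module may be invariant under a proper power of the field automorphism, and in characteristic $p$ there is no scalar obstruction to extending it), so the dimension need not jump by the full $p$-power. Making this rigorous is essentially a rederivation of the hard part of [GGLV], which is exactly the work the paper avoids by the $\cp$-split. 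As it stands, your argument has a substantive hole at the one place you flagged, and the fix is not a routine elaboration but a different (and known, but nontrivial) theorem.
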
	

Note that one cannot hope to obtain a similar bound for \( c_p(H) \) which is linear in~\( n \), for \( p \) fixed, unless one excludes
composition factors isomorphic to finite simple groups of Lie type in characteristic~\( p \) from the definition of \( c_p(H) \).
For instance, \( v_p(|\mathrm{GL}(V)|) = n(n-1)/2 \) if \( V \) has dimension \( n \) over a field of order~\( p \), and this is quadratic in \( n \).

If the linear group $H$ in Theorem~\ref{main} is $p$-solvable, then a good and explicit bound is known for $c_{p}(H) = v_{p}(|H|)$, namely,
Schmid~\cite[p. 211]{S} showed that $c_{p}(H) \leq n p/{(p-1)}^{2}$.
This is related to Brauer's $k(B)$ problem, to the $k(GV)$ theorem, and to the noncoprime $k(GV)$ problem.
For example, Kov\'acs and Robinson~\cite{KR} proved that there exists a universal constant $c$ such that whenever $V$ is a
finite, completely reducible, and faithful $\mathbb{F}_{p}H$-module of dimension $n$ for a finite $p$-solvable group $H$ with a prime $p$,
then the number $k(HV)$ of conjugacy classes of the semidirect product $HV$ is at most $c^{n}|V|$.
It turned out after the proof of the $k(GV)$ theorem that $n\log_{p}c$ can be taken to be $v_{p}(|H|) = c_{p}(H) \leq n p/{(p-1)}^{2}$.

Another motivation to establish Theorem~\ref{main} was a recent work of Robinson~\cite{Robinson} in
which, answering a question of Etingof, he proved similar upper bounds for the index of an abelian
normal subgroup of a $p'$-group contained in $\mathrm{GL}(n,\mathbb{C})$ for any given prime $p$.

For our final motivation, let $G$ be a permutation group acting on a finite set $X$. An orbital graph of $G$ is a graph with
vertex set $X$ whose arc set is an orbit of $G$ on $X \times X$.  An orbital graph whose arcs are a
subset of the diagonal $\{ (x, x) \mid x \in X \}$ is called a diagonal orbital graph. A criterion
of Higman~\cite{Higman} states that a transitive permutation group is primitive if and only if all
its nondiagonal orbital graphs are connected. Liebeck, Macpherson and Tent~\cite{LMT} described
finite primitive permutation groups whose nondiagonal orbital graphs have bounded diameter (we note
that in~\cite{LMT} orbital graphs are considered to be undirected). See also the papers of
Sheikh~\cite{Sh} and Rekv\'enyi~\cite{Re}. 

The paper~\cite{MS} contains upper bounds for the diameters of nondiagonal orbital graphs of affine
primitive permutation groups. Improving on a bound in~\cite{MS}, the second author~\cite{Saveliy}
proved that there exists a universal constant $C$ such that the diameter of a
nondiagonal orbital graph for an affine primitive permutation group $G$ of degree $p^{n}$, for a
prime $p$ and an integer $n$, is at most $C n^{3}$, provided that a point-stabilizer of $G$ has order divisible by $p$. 

As an application of Theorem~\ref{main}, we obtain a strong upper bound for the orbital diameter of
an affine primitive permutation group $G$ with point-stabilizer $H$, under the condition that
$c_{p}(H) \geq 1$, where $p$ is the prime dividing the degree of $G$.      

\begin{corollary}\label{cor}
	There exists a universal constant $C$ such that whenever $G$ is an affine primitive permutation group of degree $p^{n}$,
	where $p$ is a prime and $n$ is an integer, with a point-stabilizer $H$ satisfying $c_{p}(H) \geq 1$,
	then the diameter of any nondiagonal orbital graph of $G$ is less than $C n^{2}/c_{p}(H)$.  
\end{corollary}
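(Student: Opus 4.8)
The plan is to deduce Corollary~\ref{cor} from Theorem~\ref{main} together with an entirely elementary diameter estimate. Write the affine primitive group as $G = HV$, where $V$ is an $n$-dimensional vector space over $\mathbb{F}_p$ and $H \leq \mathrm{GL}(V)$ acts irreducibly. The orbits of $G$ on $V \times V$ are parametrised by the orbits of $H$ on $V$: the $G$-orbit of $(u,w)$ corresponds to the $H$-orbit of $w - u$. Hence a nondiagonal orbital graph $\Gamma$ of $G$ is, up to isomorphism, the Cayley graph of $V$ with connection set $\Delta \cup (-\Delta)$ for some nonzero $H$-orbit $\Delta \subseteq V \setminus \{0\}$, and its diameter is at most that of the Cayley digraph $\mathrm{Cay}(V, \Delta)$ (treating orbital graphs as undirected, as in~\cite{LMT}, only helps here, since the undirected diameter does not exceed the directed one).

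Next I would bound $\mathrm{diam}(\Gamma)$ crudely. The subgroup $\langle \Delta \rangle$ is a nonzero $H$-invariant subgroup of $V$, so by irreducibility $\langle \Delta \rangle = V$; in particular $\Delta$ contains an $\mathbb{F}_p$-basis $\delta_1, \dots, \delta_n$ of $V$. Any $v \in V$ may be written as $\sum_{i=1}^{n} a_i \delta_i$ with $0 \leq a_i \leq p-1$, hence as a sum of $\sum_i a_i \leq n(p-1)$ elements of $\Delta$; following the walk $u, u+\delta_1, u+2\delta_1, \dots$ that realises $w-u$ as such a sum shows that every vertex is reachable from every other within $n(p-1)$ steps. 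Thus $\mathrm{diam}(\Gamma) \leq n(p-1)$.

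Finally I would invoke Theorem~\ref{main} with $q = p$ and $r = 1$, which is legitimate since $H$ acts irreducibly: it yields a universal constant $C$ with $c_p(H) \leq C (n-1)/(p-1)$. As $c_p(H) \geq 1$ this rearranges to $p - 1 \leq C(n-1)/c_p(H) < C n / c_p(H)$. Combining with the previous paragraph, $\mathrm{diam}(\Gamma) \leq n(p-1) < C n^2 / c_p(H)$, which is the desired conclusion (with the same constant $C$).

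Given Theorem~\ref{main}, there is really no hard step here: the argument is just the standard translation between nondiagonal orbital graphs of affine primitive groups and Cayley graphs on $V$ with connection set an $H$-orbit, plus the single observation that the naive bound $\mathrm{diam}(\Gamma) \leq n(p-1)$ — linear, not super-linear, in $p$ — is exactly what is needed, because Theorem~\ref{main} (with $r=1$) converts the hypothesis $c_p(H)\ge 1$ into $p-1 = O(n/c_p(H))$, and $n\cdot O(n/c_p(H)) = O(n^2/c_p(H))$. The genuine content therefore lies entirely in Theorem~\ref{main} itself; the only point demanding a little care is making sure the elementary diameter bound is stated in the directed setting so that it dominates the undirected diameter used in~\cite{LMT}.
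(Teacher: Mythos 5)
Your argument is correct and essentially identical to the paper's: both reduce Corollary~\ref{cor} to the elementary diameter estimate $\mathrm{diam}(\Gamma)\le (p-1)n$ combined with Theorem~\ref{main} applied with $r=1$ to obtain $p-1\le C(n-1)/c_p(H)$. The only difference is that the paper cites the bound $\mathrm{diam}(\Gamma)\le (p-1)n$ from~\cite[Proposition~3.2]{MS}, whereas you re-derive it on the spot via the Cayley-graph interpretation of orbital graphs and the observation that the $H$-orbit $\Delta$ contains an $\mathbb{F}_p$-basis of $V$.
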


Note that if the composition factors of $H$ belong to a list of known finite simple groups,
then Corollary~\ref{cor} is independent from the classification of finite simple groups. 

\section{Bounds on prime divisors of the orders of finite simple groups}

The purpose of this section is to establish Theorem~\ref{main} in the special case when $H$ is a
quasisimple group acting irreducibly on $V$. The main result of the section is Proposition~\ref{simpbound}.

The proof relies on bounds for prime divisors of the orders of finite simple groups of Lie type.
Similar results have been obtained in~\cite{Artin, Buen, MSim}, but we will require finer bounds in terms of the dimensions
of irreducible projective modules of groups of Lie type.

We need the following corollary of a result of Artin~\cite{Artin}.

\begin{lemma}\label{artin}
	Let \( r \) be a nonnegative integer, and let \( p \) be a prime. If \( a = \pm r \) or \( a = r^2 \) then
	\[ v_p\left( \prod_{i = 1}^m (a^i - 1) \right) \leq 2\frac{\log{(r+1)^m}}{\log p}. \]
\end{lemma}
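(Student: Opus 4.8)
The plan is to reduce the estimate to the $p$-adic valuation of a single factor $a^i-1$, sum the results, and control the total via the multiplicative order of $a$ modulo $p$. First I would dispose of two easy reductions: if $p\mid a$ then $a^i-1\equiv -1\pmod p$ for every $i$, so the left-hand side vanishes; and one may assume $r\ge 2$ (the remaining cases being trivial or vacuous). So suppose $p\nmid a$ and $r\ge 2$, and let $d$ be the multiplicative order of $a$ modulo~$p$; then $v_p(a^i-1)=0$ unless $d\mid i$.

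The main ingredient is a closed formula for $v_p\!\left(\prod_{i=1}^m(a^i-1)\right)$ in terms of $d$, which follows from Artin's result~\cite{Artin} and is of lifting-the-exponent type: for odd $p$,
\[ v_p\!\left(\prod_{i=1}^m(a^i-1)\right) = \left\lfloor \frac{m}{d}\right\rfloor v_p(a^d-1) + v_p\!\left(\left\lfloor \frac{m}{d}\right\rfloor!\right), \]
while for $p=2$ one has an analogous identity --- the order of $a$ modulo $2$ being $1$ --- carrying an extra term involving $v_2(a+1)$. Granting this, I would bound the two summands of the odd-$p$ formula separately. For the first it is enough to prove $v_p(a^d-1)\le d\log_p(r+1)$, since then $\lfloor m/d\rfloor\, v_p(a^d-1)\le m\log_p(r+1)$. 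If $a=\pm r$ this is immediate from $|a^d-1|\le(r+1)^d$. If $a=r^2$, I would factor $a^d-1=(r^d-1)(r^d+1)$ and use that $d=\mathrm{ord}_p(r)/\gcd(\mathrm{ord}_p(r),2)$ to see that $p$ divides exactly one of the two factors, whose absolute value is in any case at most $(r+1)^d$, so the same bound holds. For the second summand, $v_p(N!)\le N/(p-1)$ gives $v_p(\lfloor m/d\rfloor!)\le m/(p-1)$, and since $r\ge 2$ forces $\log p\le p-1\le(p-1)\log(r+1)$ this is at most $m\log_p(r+1)$. Adding the two estimates yields $v_p\!\left(\prod_{i=1}^m(a^i-1)\right)\le 2m\log_p(r+1)=2\,\frac{\log (r+1)^m}{\log p}$, as claimed. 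The prime $p=2$ is then dealt with in the same spirit, using the analogous formula, the fact that exactly one of $r-1$, $r+1$ has $2$-adic valuation $1$, and that $\log_2(r+1)\ge 2$ when $r\ge 3$.

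The main obstacle is less any individual estimate than the fact that the constant $2$ is nearly optimal --- for instance, for $p=2$ and $a=r^2$ with $r=3$ the left-hand side is essentially $2\,\frac{\log (r+1)^m}{\log p}$ --- so the bounds must be applied carefully rather than crudely. Concretely, for $a=r^2$ the naive estimate $v_p(a^i-1)\le i\log_p(r^2)$ already loses a factor of $2$ and must be replaced by the factorization argument above, and $p=2$ genuinely requires separate handling since there the lifting-the-exponent identity acquires the extra $v_2(a+1)$ term. Beyond these two points, what remains is routine bookkeeping over the cases $a\in\{r,-r,r^2\}$.
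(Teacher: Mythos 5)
Your proof is correct but takes a genuinely different route from the paper's. The paper simply quotes Artin's explicit estimates for $p^{v_p\left(\prod(a^i-1)\right)}$, namely $3^{m/2}(r+1)^m$, $2^{m}(r+1)^m$, or $4^{m}(r+1)^m$ according to the parity of $r$ and the choice of $a$, observes that each is at most $(r+1)^{2m}$, and takes base-$p$ logarithms --- a three-line argument leaning entirely on the citation. You instead re-derive the bound from scratch via the lifting-the-exponent identity and Legendre's formula, bounding each of $\lfloor m/d\rfloor\, v_p(a^d-1)$ and $v_p(\lfloor m/d\rfloor!)$ by $m\log_p(r+1)$. This costs more bookkeeping but is self-contained and makes transparent where the constant $2$ comes from (one $m\log_p(r+1)$ for each summand), and your identification of the near-sharp case $p=2$, $a=r^2$, $r=3$ is exactly the case $4^m(r+1)^m=(r+1)^{2m}$ where Artin's estimate is an equality. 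Two points deserve more care: the $p=2$ case is only sketched, and it is the delicate one --- one needs the formula $v_2\bigl(\prod_{i=1}^m(a^i-1)\bigr)=m\,v_2(a-1)+\lfloor m/2\rfloor v_2(a+1)+v_2(\lfloor m/2\rfloor!)$ together with $\min\{v_2(r-1),v_2(r+1)\}=1$ and $r\geq 3$ (forced here since $p\nmid a$ gives $r$ odd and $r\geq 2$) to absorb the extra terms; I checked that it does close, but this should be written out. Also, your blanket assumption $r\geq 2$ quietly excludes $r=1$, $a=\pm1$ or $a=1$, where the product vanishes and the valuation is infinite --- that exclusion is also implicit in the paper's cited bound and harmless for the application, but worth flagging.
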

\begin{proof}
	In~\cite[p.~463]{Artin}, cf.~\cite[Lemma~4.2]{Buen}, it was shown that
	\[
		p^{v_p\left( \prod_{i = 1}^m (a^i - 1) \right)} \leq
		\begin{cases}
			3^{m/2} (r + 1)^m, & \text{ if } r \text{ is even, } a = \pm r \text{ or } a = r^2,\\
			2^{m} (r + 1)^m, & \text{ if } r \text{ is odd, } a = \pm r,\\
			4^{m} (r + 1)^m, & \text{ if } r \text{ is odd, } a = r^2.
		\end{cases}
	\]
	The right-hand side can be bounded above by \( (r+1)^{2m} \). The claim follows by taking base~\( p \) logarithms.
\end{proof}

Our notation for finite simple groups of Lie type follows~\cite{KL}.

\begin{lemma}\label{classical}
	Let \( G \) be \( \mathrm{L}_m(r) \), \( \mathrm{U}_m(r) \), \( \mathrm{PSp}_{2m}(r) \), \( \Omega_{2m+1}(r) \), or \( \mathrm{P\Omega}_{2m}^\pm(r) \).
	If \( p \) is a prime not dividing \( r \), then
	\[ v_p(|G|) \leq 3 \frac{\log (r+1)^m}{\log p}. \]
\end{lemma}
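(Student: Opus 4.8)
The plan is to bound $v_p(|G|)$ by writing out the order formula for each family of classical groups and noting that, up to a power of $r$ that contributes nothing to $v_p$ since $p \nmid r$, the order is a product of cyclotomic-type factors $r^i - 1$ or $r^i + 1$ (and, in the unitary and orthogonal minus-type cases, factors $r^i - (-1)^i$). Writing $r^i + 1 = (r^{2i}-1)/(r^i-1)$, each such order divides a product of the form $\prod_{i=1}^{k}(a^i - 1)$ with $a = r$ or $a = r^2$ and with $k$ a small multiple of $m$; I then apply Lemma \ref{artin} to that product. The arithmetic to check is which multiple of $m$ appears in each case and that the constant $2$ coming out of Lemma \ref{artin}, together with the change $a = r^2$ (which only doubles the exponent range, not the base $r+1$), stays under $3$.

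Concretely, I would proceed case by case. For $\mathrm{L}_m(r) = \mathrm{PSL}_m(r)$ we have $|G|$ dividing $r^{m(m-1)/2}\prod_{i=2}^{m}(r^i-1)$, so $v_p(|G|) \le v_p\!\left(\prod_{i=1}^{m}(r^i-1)\right) \le 2\log(r+1)^m/\log p$ by Lemma \ref{artin} with $a = r$; this already beats $3\log(r+1)^m/\log p$. For $\mathrm{U}_m(r)$ the order is, up to a power of $r$, $\prod_{i=2}^{m}(r^i - (-1)^i)$, and each factor divides $r^{2i}-1$, so the whole product divides $\prod_{i=1}^{2m}(r^i-1)$; Lemma \ref{artin} with $a=r$ gives the bound $2\log(r+1)^{2m}/\log p = 4\log(r+1)^m/\log p$ — here I would instead pair factors more carefully, using $r^{2j}-1 = (r^j-1)(r^j+1)$ and grouping to land inside $\prod_{i=1}^{m}(r^{2i}-1)$, i.e. apply Lemma \ref{artin} with $a = r^2$ and $k = m$, yielding $2\log(r+1)^m/\log p$. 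The same device handles $\mathrm{PSp}_{2m}(r)$ and $\Omega_{2m+1}(r)$, whose orders are $r^{m^2}\prod_{i=1}^{m}(r^{2i}-1)$ up to a factor dividing $2$ (absorbed since for $p=2$ the estimate is still fine, or treated by noting $v_2$ of that $2$ is at most $1 \le \log(r+1)^m/\log 2$ when $r \ge 2$), so directly $v_p(|G|) \le 2\log(r+1)^m/\log p$ by Lemma \ref{artin} with $a = r^2$. For $\mathrm{P\Omega}_{2m}^{\pm}(r)$ the order is, up to a power of $r$ and a small $2$-power, $(r^m \mp 1)\prod_{i=1}^{m-1}(r^{2i}-1)$, which divides $\prod_{i=1}^{m}(r^{2i}-1)$, giving the same bound.

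The main obstacle is purely bookkeeping rather than conceptual: in each family one must correctly normalize the order formula (stripping the $r$-power and any constant $2$- or $4$-factor coming from the center or from $\gcd$ terms like $\gcd(m,r-1)$), verify that the resulting product embeds into $\prod_{i=1}^{m}(r^{2i}-1)$ or $\prod_{i=1}^{m}(r^i-1)$, and confirm the numeric slack. The slack is comfortable: Lemma \ref{artin} always delivers a factor $2$, we are allowed $3$, and the leftover $1 \cdot \log(r+1)^m/\log p$ easily absorbs the bounded-order correction terms, since $v_p$ of any integer at most $4$ is at most $2 \le 2\log 3/\log 2 \le 2\log(r+1)^m/\log p$ for $m \ge 1$ and $r \ge 2$. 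Thus in every case $v_p(|G|) \le 3\log(r+1)^m/\log p$, as claimed.
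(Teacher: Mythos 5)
Your proof is correct and follows the same strategy as the paper: read off the order formulae from \cite[Table~5.1.A]{KL}, discard the power of $r$ and the $\gcd$ denominators (which only help), and feed the remaining cyclotomic-type product into Lemma~\ref{artin}. Two small bookkeeping differences are worth noting. For $\mathrm{U}_m(r)$ the paper simply takes $a = -r$ in Lemma~\ref{artin}, since $\prod_{i\le m}|(-r)^i - 1| = \prod_{i\le m}(r^i-(-1)^i)$ exactly matches the unitary order factors; your regrouping into $\prod_{i\le m}(r^{2i}-1)$ also works but is a detour given that the $\pm r$ option in Lemma~\ref{artin} was put there for precisely this case. For $\mathrm{P\Omega}^\pm_{2m}(r)$ your observation that $(r^m\mp 1)\prod_{i=1}^{m-1}(r^{2i}-1)$ actually divides $\prod_{i=1}^{m}(r^{2i}-1)$ is cleaner than the paper's split (Lemma~\ref{artin} applied to $\prod_{i<m}(r^{2i}-1)$ plus a separate estimate $v_p(r^m\pm 1)\le \log(r+1)^m/\log p$) and yields the constant $2$ rather than $3$ in that case; the paper could have stated the lemma with constant $2$ throughout, though $3$ suffices for its later use.
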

\begin{proof}
	We use Lemma~\ref{artin} with \( a = r \) for linear groups, \( a = -r \) for unitary groups, and \( a = r^2 \) for the spinor and orthogonal groups;
	see~\cite[Table~5.1.A]{KL} for the order formulae for these groups.
	For all cases except of the orthogonal groups in even dimension that gives us the bound
	\[ v_p(|G|) \leq 2\frac{\log (r+1)^m}{\log p}. \]
	In case of \( \mathrm{P\Omega}_{2m}^\pm(r) \), the prime \( p \) may divide \( r^m \pm 1 \) and \( \prod^{m-1}_{i = 1} (r^{2i} - 1) \).
	Since \( v_p(r^m \pm 1) \leq \log{(r+1)^m}/\log p \) we get the final bound.
\end{proof}

\begin{lemma}\label{exceptional}
	Let \( G \) be an exceptional finite simple group defined over the field of order \( r \).
	If \( p \) is a prime not dividing \( r \), then
	\[ v_p(|G|) \leq 30 \frac{\log (r+1)}{\log p}. \]
\end{lemma}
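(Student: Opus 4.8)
The plan is to route everything through Lemma~\ref{artin}. Since $p$ does not divide $r$, the number $v_p(|G|)$ equals $v_p$ of the $p'$-part of $|G|$, so it suffices to show that this $p'$-part divides $\prod_{i=1}^{15}(r^{2i}-1)$ for every exceptional simple group $G$ over~$\mathbb{F}_r$. Granting this, Lemma~\ref{artin} applied with $a=r^2$ and $m=15$ gives
$v_p\!\left(\prod_{i=1}^{15}(r^{2i}-1)\right)\le 2\log((r+1)^{15})/\log p = 30\log(r+1)/\log p$, which is exactly the asserted bound, so no further analytic work is needed.

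To prove the divisibility I would run through the list of exceptional types, reading off the order formulae from~\cite[Table~5.1.A]{KL} together with the standard orders of the Suzuki and Ree groups ${}^2B_2$, ${}^2G_2$, ${}^2F_4$. In each case $|G|$ is $r^N$ times a product of factors of the shape $r^d-1$, $r^d+1$, or---only for ${}^3D_4$---the factor $r^8+r^4+1$, possibly divided by a $\gcd$-term such as $\gcd(3,r\mp 1)$ or $\gcd(2,r-1)$, which can only decrease $v_p$. The mechanism is uniform: for the untwisted types $G_2,F_4,E_6,E_7,E_8$ every torus degree $d$ occurring is even, so $r^d-1=(r^2)^{d/2}-1$; a factor $r^d+1$ always divides $(r^2)^d-1$; a factor $r^d-1$ with $d$ odd divides $(r^2)^d-1$; and $r^8+r^4+1$ divides $(r^2)^6-1$. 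A short check shows that in every type the exponents $i$ thus produced are pairwise distinct and bounded by $15$---the extreme case being $E_8$, whose degrees $2,8,12,14,18,20,24,30$ yield exponents $\{1,4,6,7,9,10,12,15\}$---so the product of these factors, and hence the $p'$-part of $|G|$, divides $\prod_{i=1}^{15}(r^{2i}-1)$.

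The only genuine work here is bookkeeping, and the step I expect to be most delicate is handling the twisted groups ${}^2E_6$, ${}^3D_4$, ${}^2B_2$, ${}^2G_2$, ${}^2F_4$: one must verify there that the ``$+1$'' factors (and $r^8+r^4+1$ for ${}^3D_4$) are absorbed into \emph{distinct} factors $(r^2)^i-1$, so that no multiplicities are lost when passing to $\prod_{i=1}^{15}(r^{2i}-1)$. Past that, invoking Lemma~\ref{artin} finishes the proof, and it is worth noting that the constant $30$ is calibrated precisely by $E_8$; a cruder route that bounds each type by its own constant and then takes the maximum would also work, but would not land on such a clean value.
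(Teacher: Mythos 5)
Your proof is correct and rests on the same tool the paper uses, namely Lemma~\ref{artin} applied with \( a = r^2 \); the constant \( 30 \) comes from \( E_8 \) via \( m = 15 \) in both arguments. The difference is organisational: the paper treats the types piecemeal, invoking Lemma~\ref{artin} with varying parameters for \( G_2, F_4, E_6, E_7, E_8, {}^2E_6 \) (for instance \( a = r \), \( m = 12 \) for \( E_6 \) and \( a = -r \), \( m = 12 \) for \( {}^2E_6 \)), and handling \( {}^2B_2, {}^2G_2, {}^3D_4, {}^2F_4 \) by the cruder observation that their \( p \)-parts are at most \( (r+1)^{16} \), which bypasses Lemma~\ref{artin} entirely. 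Your single divisibility statement, that the cyclotomic part of every exceptional order divides \( \prod_{i=1}^{15}(r^{2i}-1) \) with all resulting exponents distinct, unifies all ten families in one pass and is a cleaner route to the same bound; your exponent bookkeeping checks out in every case. One wording slip worth fixing: you write that ``\( v_p(|G|) \) equals \( v_p \) of the \( p' \)-part of \( |G| \)'', but the \( p' \)-part by definition has trivial \( p \)-valuation; what you mean is the part of \( |G| \) coprime to \( r \) (equivalently, \( |G| \) divided by its \( r \)-power factor), and the rest of your argument makes that intent clear.
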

\begin{proof}
	We use the order formulae for the exceptional groups, see~\cite[Table~5.1.B]{KL}.
	For \( {}^2B_2(r) \), \( {}^2G_2(r) \), \( {}^2F_4(r) \), and \( {}^3D_4(r) \) we estimate the \( p \)-part
	of the order from above by \( (r+1)^{16} \), so \( v_p(|G|) \leq 16 \log{(r+1)} / \log p \) in this case.

	For the other groups we use Lemma~\ref{artin} with the following parameters:
	\begin{align*}
		G_2(r),\, a = r^2,\, m = 3,\\
		F_4(r),\, a = r^2,\, m = 6,\\
		E_6(r),\, a = r,\, m = 12,\\
		E_7(r),\, a = r^2,\, m = 9,\\
		E_8(r),\, a = r^2,\, m = 15,\\
		{}^2E_6(r),\, a = -r, m = 12.
	\end{align*}
	Clearly the \( E_8(r) \) case dominates the rest, which gives us the claimed bound.
\end{proof}

The next lemma shows that the dimensions of cross-characteristic modules for a group of Lie type are large in comparison to the prime divisors of the order of the group.

\begin{lemma}\label{pbound}
	There exists a universal constant \( C \) such that the following is true.
	Let \( G \) be a nonabelian finite simple group of Lie type defined over a field of order~\( r \)
	having an irreducible projective representation of dimension \( n \) over a field of characteristic~\( p \).
	If \( p \) divides \( |G| \) and does not divide \( r \), then \( p \leq C\cdot n \).
	Moreover, the following are true:
	\begin{enumerate}[\normalfont(1)]
		\item If \( G \) is \( \mathrm{L}_m(r) \), \( \mathrm{PSp}_{2m}(r) \), \( \mathrm{U}_m(r) \), \( \mathrm{P \Omega}^{\pm}_{2m}(r) \), or \( \Omega_{2m+1}(r) \),
			then \( r^{m-1} \leq C \cdot n \).
		\item If \( G \) is an exceptional group, then \( r \leq C \cdot n \).
	\end{enumerate}
\end{lemma}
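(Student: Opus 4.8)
The plan is to reduce everything to known lower bounds for the dimensions of cross-characteristic (projective) irreducible representations of finite simple groups of Lie type, combined with the upper bounds on $v_p(|G|)$ obtained in Lemmas~\ref{classical} and~\ref{exceptional}. First I would recall the classical lower bounds on the minimal dimension $R_p(G)$ of a nontrivial irreducible projective $\mathbb{F}$-representation in characteristic $p \nmid r$: by the work of Landazuri--Seitz, improved by Seitz--Zalesskii and collected in~\cite{KL} (and in the surveys by Tiep and others), one has $R_p(G) \geq c \cdot r^{m-1}$ for the classical groups $\mathrm{L}_m(r)$, $\mathrm{U}_m(r)$, $\mathrm{PSp}_{2m}(r)$, $\Omega_{2m+1}(r)$, $\mathrm{P\Omega}^{\pm}_{2m}(r)$ (with the well-known small exceptions, e.g.\ $\mathrm{L}_2(r)$, where $R_p \geq (r-1)/2$ still grows with $r$), and $R_p(G) \geq c \cdot r$ for the exceptional groups, where $c$ is an absolute positive constant. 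Since any irreducible projective representation of dimension $n$ that is nontrivial has $n \geq R_p(G)$, this immediately gives statements (1) and (2): $r^{m-1} \leq C \cdot n$ and $r \leq C \cdot n$ respectively. One has to check that the finitely many genuinely small cases (small rank, small field) do not cause trouble; since in each such case $|G|$ is bounded, one may absorb them into the constant $C$.

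With (1) and (2) in hand, the main assertion $p \leq C \cdot n$ follows. The idea is that $p$ divides $|G|$ and $p \nmid r$, so $p$ divides $r^i - 1$ (or $r^i + 1$) for some $i$ at most the rank, hence $p \leq r^i + 1$; but a crude bound like this is too weak on its own. Instead I would argue: in the classical case, $p$ divides one of the cyclotomic-type factors $r^i \mp 1$ with $i \leq 2m$, so in particular $p \leq r^{2m} $, which is not directly linear in $r^{m-1}$. The cleaner route is to use Lemmas~\ref{classical} and~\ref{exceptional} directly. Since $p \mid |G|$ we have $v_p(|G|) \geq 1$, so from Lemma~\ref{classical}, $1 \leq 3 \log(r+1)^m / \log p$, i.e.\ $\log p \leq 3 m \log(r+1)$. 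That alone only gives $p \leq (r+1)^{3m}$. So the genuinely useful input is different: one shows that a prime $p$ dividing $|G|$ with $p \nmid r$ must satisfy $p \leq r^e + 1$ where $e$ is bounded by the Lie rank, and then one combines this with the fact that $R_p(G)$ grows at least like $r^{\mathrm{rank}-1}$ in a way that outpaces $r^e$ only when the rank is large; for bounded rank one simply notes $p \leq |G| \leq$ a function of $n$ via (1)/(2) together with $r \leq$ (polynomial in $n$). The robust formulation: any prime divisor $p$ of $|G|$ satisfies $p - 1 \leq r^{h}$ for $h$ the Coxeter number (bounded linearly in the rank), and a minimal faithful representation has dimension at least roughly $r^{\mathrm{rank}}/(\text{something polynomial in the rank})$; balancing gives $p \leq C n$ once one knows (1) and (2).

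Let me restructure to make the dependencies clean. I would prove the three conclusions in the order: (1), then (2), then the bound $p \leq C n$ as a consequence. For (1) and (2) the only work is quoting the Landazuri--Seitz--Zalesskii bounds from~\cite{KL} and dispatching the short explicit list of exceptions where $R_p(G)$ is abnormally small relative to $r^{m-1}$ (these are tabulated and each has $|G|$ bounded). For the final bound: every prime $p \mid |G|$ with $p \nmid r$ divides $\Phi_d(r)$ for some $d$ dividing into the exponents of $G$, hence $d$ is at most a bounded multiple of the rank; thus $p \leq r^d + 1 \leq r^{c_0 \cdot \mathrm{rank}}$. When the rank $m$ is at least some absolute constant $m_0$, the bound $r^{m-1} \leq C n$ from (1) (classical) combined with an elementary estimate forces $p \leq C' n$, because $r^{c_0 m}$ and $r^{m-1}$ are polynomially comparable once $m$ is large; for exceptional groups the rank is already bounded and (2) gives $r \leq Cn$, hence $p \leq r^{c_0} + 1 \leq C' n$ directly. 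Finally the groups of bounded classical rank $m < m_0$ have $r \leq C n$ by (1) as well (since $r^{m-1} \leq Cn$ with $m \geq 2$ gives $r \leq Cn$), so again $p \leq r^{c_0 m_0} + 1$, which is polynomial in $n$; but we want \emph{linear} in $n$, so here one instead uses that for bounded rank the minimal faithful dimension $n$ already dominates $|G|^{1/k}$ for a bounded $k$, whence $p \leq n^{k}$—still not linear. The cleanest fix, and the main obstacle I anticipate, is handling exactly these bounded-rank families ($\mathrm{L}_2(r)$, $\mathrm{L}_3(r)$, $\mathrm{PSp}_4(r)$, $\mathrm{U}_3(r)$, etc.): there one must use the sharper fact that a prime $p$ dividing $|\mathrm{L}_m(r)|$ for small fixed $m$ divides $r^i-1$ with $i \leq m$, so $p \leq r^m + 1$, while $R_p \geq c\, r^{m-1}$; the ratio $r^m/r^{m-1} = r$ is not bounded, so linearity genuinely fails unless one exploits that such large primes $p$ (with $p$ close to $r^{m-1}$ or larger) are \emph{self-centralizing} and force $n \geq p - 1$ via a block-theoretic or Brauer-character degree argument (a prime $p$ with $p \parallel |G|$ and cyclic Sylow acts with a Brauer character of degree $\geq (p-1)/|N_G(P):C_G(P)|$, and the relevant index is bounded). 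That Brauer-character lower bound, $n \geq (p-1)/e$ with $e$ bounded, is the real engine behind $p \leq C n$, and quoting it correctly (for all the Lie-type families uniformly) is where the care is needed.
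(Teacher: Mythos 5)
You have (1) and (2) right: the paper does exactly what you say, namely extract from \cite[Table~5.3.A]{KL} lower bounds of the shape $n \geq C_1 r^{\alpha m + \beta}$ (classical) and $n \geq C_2 r^{\alpha}$ (exceptional) and deduce $r^{m-1} \leq C n$, respectively $r \leq C n$.

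The gap is in your deduction of $p \leq Cn$. You correctly observe that $p$ divides some cyclotomic-type factor $r^i \mp 1$ with $i$ bounded by (roughly) the rank, conclude $p \leq r^{\mathrm{rank}} + 1$, compare with $n \gtrsim r^{\mathrm{rank}-1}$, and then claim the ratio is $\sim r$ and ``linearity genuinely fails,'' proposing a block-theoretic/Brauer-character rescue. That claim is wrong, and the rescue is unnecessary. The missing elementary observation, which is the actual content of the paper's proof, is that the offending factors of $|G|$ themselves factor integrally into pieces each of size at most a constant times $r^{m-1}$, and $p$ must divide one of the pieces. Concretely: for $\Omega_{2m+1}(r)$ the factor $r^{2m}-1$ equals $(r^m-1)(r^m+1)$ and both pieces are $O(r^{2m-2})$; for $\mathrm{L}_m(r)$ the factor $r^m - 1$ equals $(r-1)\cdot\frac{r^m-1}{r-1}$ and $\frac{r^m-1}{r-1} \leq \frac{r}{r-1}\, r^{m-1} \leq 2r^{m-1}$; for ${}^3D_4(r)$ one uses $r^8+r^4+1=(r^4+r^2+1)(r^4-r^2+1)$; for ${}^2F_4(r)$ one uses $r^6+1=(r^2+1)(r^4-r^2+1)$; and for $\mathrm{Sz}(r)$ one uses the Aurifeuillian factorization $r^2+1 = (r+1-\sqrt{2r})(r+1+\sqrt{2r})$, valid since $r=2^{2e+1}$. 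In every case, since $p$ is prime it divides one of the factors, each of which is bounded by a constant multiple of the lower bound for $n$ from (1) or (2), hence $p \leq Cn$. This handles the bounded-rank families ($\mathrm{L}_2(r)$, $\mathrm{L}_3(r)$, $\mathrm{PSp}_4(r)$, etc.) exactly like the others — there is no genuine obstruction there. You should replace the Brauer-character digression with this factorization step, which is both simpler and what is actually needed.
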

\begin{proof}
	Assume first that \( G \) is \( \mathrm{L}_m(r) \), \( \mathrm{PSp}_{2m}(r) \), \( \mathrm{U}_m(r) \), \( \mathrm{P \Omega}^{\pm}_{2m}(r) \), or \( \Omega_{2m+1}(r) \).
	We claim that for every type of the group (linear, symplectic, unitary or orthogonal) the dimension \( n \) is bounded from
	below by \( C_1 \cdot r^{\alpha m + \beta} \) where \( C_1 \) is some universal constant and \( \alpha, \beta \) depend only on the type of the group.
	For example, if \( G \simeq \mathrm{U}_{m}(r) \) and \( m \) is even, then by~\cite[Table~5.3.A]{KL}, we have \( n \geq (r^{m-1} - 1)/(r+1) \).
	Therefore \( n \geq \frac{1}{2} r^{m-1} \), so \( \alpha m + \beta \) is \( m - 1 \) in this case.

	The lower bounds on \( n \) extracted from~\cite[Table~5.3.A]{KL} are collected in the third column of Table~1.
	In the table below we list the expressions \( \alpha m + \beta \) such that \( n \geq C_1 \cdot r^{\alpha m + \beta} \) for classical groups:
	\begin{center}
	\begin{tabular}{c | c c c c c}
		Group & \( L_m(r) \) & \( \mathrm{PSp}_{2m}(r) \) & \( U_m(r) \) & \( \mathrm{P \Omega}^{\pm}_{2m}(r) \) & \( \Omega_{2m+1}(r) \)\\\hline
		Bound & \( m-1 \) & \( m \) & \( m-1 \) & \( 2m - 3 \) & \( 2m-2 \)
	\end{tabular}
	\end{center}
	Clearly, for some constant \( C \), we have \( r^{m-1} \leq C \cdot n \), proving~(1).

	Since \( p \) divides \( |G| \), it divides at least one of the factors from the order formula for \( |G| \), see~\cite[Table~5.1.A]{KL}.
	In the second column of Table~\ref{tabSimp} we list the largest such factors, that is, only those which are not dominated
	by the lower bound on the dimension~\( n \). For instance, if \( G \simeq \Omega_{2m+1}(r) \), then \( p \)
	divides one of \( r^{2i} - 1 \), \( i = 1, \dots, m \). We know that \( n \geq C_1 \cdot r^{2m-2} \) from the table above,
	so \( r^{2i} - 1 \leq C_1' \cdot n \) for \( i = 1, \dots, m-1 \) and some universal constant \( C_1' \).
	Hence we put the factor \( r^{2m} - 1 \) in Table~\ref{tabSimp}.

	Note that \( r^{2m} - 1 \) factorizes as \( (r^m - 1)(r^m + 1) \), so \( p \) divides one of the factors, and therefore
	one has \( p \leq C_1' \cdot n \). Similar factorizations can be used for other classical groups, so
	we derive that \( p \leq C \cdot n \) for some universal constant \( C \). 

	Assume now that \( G \) is an exceptional group of Lie type. The dimension \( n \) can be bounded from below by \( C_2 \cdot r^\alpha \)
	for some universal constants \( C_2 \) and \( \alpha \) depending only on the type of the group by~\cite[Table~5.3.A]{KL}.
	We list the corresponding \( \alpha \) for the exceptional groups in the following table:
	\begin{center}
	\begin{tabular}{c | c c c c c c c c c c}
		Group & \( E_6 \) & \( E_7 \) & \( E_8 \) & \( F_4 \) & \( ^2E_6 \) & \( G_2 \) & \( ^3 D_4 \) & \( ^2 F_4 \) & \( \mathrm{Sz} \) & \( ^2 G_2 \)\\ \hline
		Bound & \( 11 \) & \( 17 \) & \( 29 \) & \( 8 \) & \( 11 \) & \( 3 \) & \( 5 \) & \( 5 \) & \( 1 \) & \( 2 \)
	\end{tabular}
	\end{center}
	It immediately follows that \( r \leq C \cdot n \) for some constant \( C \),  proving~(2).

	The prime \( p \) divides the order of the group and, hence, divides some factor in its order formula, see~\cite[Table~5.1.B]{KL}.
	As in the previous case, in the second column of Table~\ref{tabSimp} we list the largest such factor. Note that for the group \( ^3 D_4(r) \)
	there are two factors not dominated by the lower bound for \( n \).

	We factorize the polynomials from the order formulae in order to obtain a bound of the form \( p \leq C \cdot n \) for some universal constant \( C \).
	For example, if \( G \simeq E_6(r) \) and \( p \) divides \( r^{12} - 1 \), we derive that \( p \) divides one of \( r^6 - 1 \) or \( r^6 + 1 \)
	which is smaller than \( r^{11} \). The only nontrivial cases arise when \( G \) is \( ^3 D_4(r) \),
	\( ^2 F_4(r) \) or \( \mathrm{Sz}(r) \). If \( G \simeq {^3}D_4(r) \) and \( p \) divides \( r^8 + r^4 + 1 \), we use the factorization
	\[ r^8 + r^4 + 1 = (r^4 + r^2 + 1)(r^4 - r^2 + 1), \]
	hence \( p \leq 3 \cdot r^5 \). If \( G \simeq {^2}F_4(r) \) and \( p \) divides \( r^6 + 1 \),
	then we use \( r^6 + 1 = (r^2 + 1)(r^4 - r^2 + 1) \), so \( p \leq 3 \cdot r^5 \). Finally, if \( G \simeq \mathrm{Sz}(r) \)
	and \( p \) divides \( r^2 + 1 \), then recall that \( r = 2^{2e+1} \) for some integer~\( e \) and we have
	\[ r^2 + 1 = (r+1 - \sqrt{2r})(r+1 + \sqrt{2r}). \]
	Therefore \( p \leq 3 \cdot r \) in this case, finishing the proof of the lemma.
\end{proof}

Notice that in the setting of the lemma we also have bounds of the form \( p-1 \leq C'(n-1) \),
\( r^{m-1} - 1 \leq C'(m-1) \) in case~(1), and \( r-1 \leq C'(n-1) \) in case~(2) for some universal
constant \( C' \).

\begin{table}[h]
\begin{tabular}{l | l | p{5cm}}
	Group & Largest factors & Lower bounds\\ \hline
	\( L_2(r) \) & \( r^2 - 1 \) & \( (r-1)/\gcd(2, r-1) \)\\
	\( L_m(r), \, m \geq 3 \) & \( r^m - 1 \) & \( r^{m-1} - 1 \)\\
	\( \mathrm{PSp}_{2m}(r), \, m \geq 2 \) & \( r^{2i} - 1 \), \( m < 2i \leq 2m \) &
	\( (r^m - 1)/2,\, r \text{ odd} \) \newline \( r^{m-1}(r^{m-1} - 1)(r-1)/2,\, r \text{ even} \)\\
	\( U_m(r), \, m \geq 3 \) & \( r^m - (-1)^m \) &
	\( r(r^{m-1} - 1)/(r+1),\, m \text{ odd} \) \newline  \( (r^m - 1)/(r+1),\, m \text{ even} \)\\
	\( \mathrm{P\Omega}^+_{2m}(r), \, m \geq 4 \) & \( r^{2m-2} - 1 \) &
	\( (r^{m-1} - 1)(r^{m-2} + 1),\, r \neq 2, 3, 5 \) \newline \( r^{m-2}(r^{m-1}-1),\, r = 2, 3, 5 \)\\
	\( \mathrm{P\Omega}^-_{2m}(r), \, m \geq 4 \) & \( r^{2m-2} - 1 \) & \( (r^{m-1} + 1)(r^{m-2} - 1) \)\\
	\( \mathrm{\Omega}_{2m+1}(r), \, m \geq 3 \), \( r \) odd & \( r^{2m} - 1 \) &
	\( r^{2m-2} - 1,\, r > 5 \) \newline \( r^{m-1}(r^{m-1}-1),\, r = 3, 5 \)\\ \hline
	\( E_6(r) \) & \( r^{12} - 1 \) & \( r^9(r^2 - 1) \)\\
	\( E_7(r) \) & \( r^{18} - 1 \) & \( r^{15}(r^2 - 1) \)\\
	\( E_8(r) \) & \( r^{30} - 1 \) & \( r^{27}(r^2 - 1) \)\\
	\( F_4(r) \) & \( r^{12} - 1 \) & \( r^6(r^2 - 1) \), \( r \) odd \newline \( r^7(r^3 - 1)(r-1)/2 \), \( r \) even\\
	\( {}^2E_6(r) \) & \( r^{12} - 1 \) & \( r^9(r^2 - 1) \)\\
	\( G_2(r) \) & \( r^6 - 1 \) & \( r(r^2 - 1) \)\\
	\( {}^3D_4(r) \) & \( r^8 + r^4 + 1 \), \( r^6 - 1 \) & \( r^3(r^2 - 1) \)\\
	\( {}^2F_4(r) \) & \( r^6 + 1 \) & \( r^4\sqrt{r/2}(r - 1) \)\\
	\( \mathrm{Sz}(r) \) & \( r^2 + 1 \) & \( \sqrt{r/2}(r - 1) \)\\
	\( {}^2G_2(r) \) & \( r^3 + 1 \) & \( r(r - 1) \)\\
\end{tabular}
\medskip
\caption{Largest factors in order formulae and lower bounds of dimensions of representations for groups of Lie type}\label{tabSimp}
\end{table}

The following result will be used in the main proof. Recall that \( G \) is quasisimple, if it is perfect and \( G/Z(G) \) is nonabelian simple.

\begin{proposition}\label{simpbound}
	There exists a universal constant \( C  \) such that the following is true.
	Let \( G \) be a quasisimple group such that \( G/Z(G) \) is not isomorphic to a group of Lie type in characteristic \( p \).
	If \( G \) has an irreducible projective representation of dimension \( n \) over a field of characteristic~\( p \), then
	\[ v_p(|G|) \leq C \cdot \frac{n-1}{p-1}. \]
\end{proposition}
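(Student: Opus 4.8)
The plan is to use the classification of finite simple groups to split into three cases according to whether \( S := G/Z(G) \) is an alternating group, a sporadic group, or a finite simple group of Lie type in a characteristic different from~\( p \); the case in which \( S \) is of Lie type in characteristic~\( p \) is excluded by hypothesis. Throughout I write \( v_p(|G|) = v_p(|S|) + v_p(|Z(G)|) \), bound \( v_p(|S|) \) as the main term, and check at the end that \( v_p(|Z(G)|) \) is dominated by the same quantity. One may assume \( p \mid |G| \) (otherwise the left-hand side is zero) and that the projective representation is nontrivial, so that \( n \geq 2 \) and \( n \) is at least the smallest dimension \( d_p(S) \) of a nontrivial irreducible projective representation of \( S \) in characteristic~\( p \); peeling off the finitely many simple groups with exceptional Schur multiplier and the low-rank coincidences among simple groups as separate finite verifications, one may also assume \( p \mid |S| \).

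For \( S = A_m \) with \( m \) large I would use that \( d_p(A_m) \geq m-2 \) together with \( v_p(m!) = (m - s_p(m))/(p-1) \leq (m-1)/(p-1) \), where \( s_p(m) \geq 1 \) is the base-\( p \) digit sum, and \( v_p(|Z(G)|) \leq 1 \). Since \( p \mid |A_m| \) gives \( p \leq m \), and \( n \geq m-2 \), this yields \( v_p(|G|) \leq (m-1)/(p-1) + 1 \leq C(n-1)/(p-1) \) for a suitable universal~\( C \). The sporadic groups and the finitely many small \( A_m \) (with their coincidences with groups of Lie type) contribute only finitely many pairs \( (G,p) \), so a single constant covers them.

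The essential case is \( S \) of Lie type over the field of order~\( r \) with \( p \nmid r \). Here Lemma~\ref{classical} (for classical \( S \), with rank parameter~\( m \)) or Lemma~\ref{exceptional} (for exceptional \( S \)) gives \( v_p(|S|) \leq c\,\log(r+1)^{k}/\log p \) with \( (c,k) = (3,m) \) or \( (30,1) \), while the remark following Lemma~\ref{pbound} gives \( p - 1 \leq C'(n-1) \) and, in the classical case, \( r^{m-1} - 1 \leq C'(n-1) \) (in the exceptional case, \( r - 1 \leq C'(n-1) \)). It then suffices to bound \( (p-1)\log(r+1)^{k}/\log p \) by a universal multiple of \( n-1 \). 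I would split on whether or not \( p \geq r^{m-1} \) for classical \( S \) (respectively \( p \geq r \) for exceptional \( S \)). In the first subcase, \( \log p \geq (m-1)\log r \) forces \( \log(r+1)^{m}/\log p \leq \frac{m}{m-1}\cdot\frac{\log(r+1)}{\log r} \leq 4 \), so the bound follows from \( p - 1 \leq C'(n-1) \). In the second subcase, monotonicity of \( t \mapsto (t-1)/\log t \) on \( (1,\infty) \) gives \( (p-1)/\log p \leq (r^{m-1}-1)/((m-1)\log r) \); multiplying by \( \log(r+1)^{m} = m\log(r+1) \), the factors \( m/(m-1) \) and \( \log(r+1)/\log r \) are absorbed into a universal constant, leaving a universal multiple of \( r^{m-1}-1 \leq C'(n-1) \). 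The exceptional case is identical after setting \( k=1 \) and replacing \( r^{m-1} \) by~\( r \). Finally \( v_p(|Z(G)|) \leq v_p(r-1) \leq \log(r+1)/\log p \), which is bounded by the same analysis.

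The step I expect to be the main obstacle is precisely this balancing in the Lie type case. The quantity \( v_p(|S|) \) can grow linearly in the rank~\( m \), whereas the representation dimension is guaranteed only to be of order \( r^{m-1} \), and \( p \) itself can be nearly as large as~\( r^{m} \); thus the crude estimate \( p \lesssim r^{m} \) is too weak when set against \( n \gtrsim r^{m-1} \). What rescues the argument is that these two difficulties are mutually exclusive: when \( p \) is large, \( \log p \) is large and cancels the factor~\( m \), and when \( p \) is small, the monotonicity of \( (t-1)/\log t \) lets one trade that factor against \( \log(r^{m-1}) \) in the denominator. Carrying this out while quarantining the low-rank families, small fields, exceptional Schur multipliers and exceptional isomorphisms in separate finite checks — so that no generic dimension bound is applied outside its range of validity — is the part demanding the most care.
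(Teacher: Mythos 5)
Your proposal is correct and follows essentially the same approach as the paper's proof: a CFSG case split combined with the Artin-type upper bounds of Lemmas~\ref{classical} and~\ref{exceptional} for \( v_p(|S|) \), the Landazuri--Seitz-type lower bounds on \( n \) from Lemma~\ref{pbound} (and its remark), and a case split on the size of \( p \) relative to roughly \( r^{m-1} \) to balance the two. The only noteworthy technical variations are that the paper invokes~\cite[Corollary~5.3.3]{KL} to pass directly to \( G/Z(G) \) rather than bounding \( v_p(|Z(G)|) \) separately, and it places the threshold at \( \sqrt{3(r^{m-1}-1)} \) with a square-root estimate where you place it at \( r^{m-1} \) and use the monotonicity of \( t \mapsto (t-1)/\log t \); both devices accomplish the same balancing.
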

\begin{proof}
	By~\cite[Corollary~5.3.3]{KL}, the degree of a minimal projective \( p \)-modular representation of \( G \)
	is bounded below by the corresponding number for \( G/Z(G) \). We may thus replace \( G \) by \( G/Z(G) \) and assume that \( G \) is simple.

	Let \( C \) be a large fixed constant (how to specify \( C \) will be clear from the proof).
	Notice that by choosing \( C \) large enough we may assume that \( G \) is not a sporadic group.

	If \( G \) is isomorphic to \( \mathrm{Alt}(m) \), \( m \geq 5 \), then by~\cite[Proposition~5.3.7]{KL} one has \( n \geq m-4 \). Thus by Legendre's formula
	\[ v_p(|\mathrm{Alt}(m)|) \leq \frac{m-1}{p-1} \leq 5 \frac{n-1}{p-1}, \]
	where the last inequality uses the fact that \( n+3 \leq 5(n-1) \) for \( n \geq 2 \). Now the claimed inequality follows for \( C \geq 5 \).

	Now we assume that \( G \) is a group of Lie type not in characteristic \( p \).
	We first consider classical groups. Fix \( r \) and \( m \) as in Lemma~\ref{classical}, and notice
	that for \( r \geq 2 \) and \( m \geq 2 \) we have \( (r+1)^m \leq 9(r^{m-1}-1)^2 \).
	Lemma~\ref{classical} implies
	\[ v_p(|G|) \leq 3 \frac{\log (9(r^{m-1}-1)^2)}{\log p}. \]
	If \( p \geq \sqrt{3(r^{m-1}-1)} \), then
	\[ v_p(|G|) \leq 3 \frac{\log(9(r^{m-1}-1)^2)}{\log{\sqrt{3(r^{m-1}-1)}}} = 12 \leq C \cdot \frac{n-1}{p-1}, \] 
	where the last inequality holds for \( C \) large enough by Lemma~\ref{pbound}. If \( p < \sqrt{3(r^{m-1}-1)} \), then
	\[ v_p(|G|) \leq 3 \frac{\log(9(r^{m-1}-1)^2)}{\log 2} < C_1 \sqrt{r^{m-1}-1} < C_2 \cdot \frac{r^{m-1}-1}{p-1}, \]
	for some constants \( C_1, C_2 \). By Lemma~\ref{pbound}~(1), we have \( r^{m-1} - 1 \leq C_3 \cdot (n-1) \) for some \( C_3 \).
	Therefore
	\[ v_p(|G|) \leq C_2 \cdot C_3 \cdot \frac{n-1}{p-1} \leq C \cdot \frac{n-1}{p-1}, \]
	whenever \( C \geq C_2 \cdot C_3 \).

	We turn to the exceptional groups. If \( r \) is the order of the defining field, then \( r+1 \leq 3(r-1) \) and Lemma~\ref{exceptional} imply
	\[ v_p(|G|) \leq 30 \frac{\log (r+1)}{\log p} \leq 30 \frac{\log 3(r-1)}{\log p}. \]
	If \( p \geq \sqrt{3(r-1)} \), then
	\[ v_p(|G|) \leq 30 \frac{\log 3(r-1)}{\log \sqrt{3(r-1)}} = 60 \leq C \cdot \frac{n-1}{p-1}, \]
	where the last inequality uses Lemma~\ref{pbound}. If \( p < \sqrt{3(r-1)} \), then
	\[ v_p(|G|) \leq 30 \frac{\log 3(r-1)}{\log 2} < C_1' \sqrt{r-1} < C_2' \cdot \frac{r-1}{p-1}, \]
	for some constants \( C_1', C_2' \).
	By Lemma~\ref{pbound}~(2), we have \( r-1 \leq C_3' \cdot (n-1) \), hence
	\[ v_p(|G|) \leq C_2' \cdot C_3' \cdot \frac{n-1}{p-1} \leq C \cdot \frac{n-1}{p-1}, \]
	for \( C \geq C_2' \cdot C_3' \).
\end{proof}

\section{Nonabelian composition factors}

For a finite group \( G \) with composition series \( 1 = G_0 < \dots < G_m = G \) let \( \cp(G) \) be the
sum of \( v_p(|G_i/G_{i-1}|) \) over such \( i \in \{ 1, \dots, m \} \) that \( G_i/G_{i-1} \)
is nonabelian and not isomorphic to a finite simple group of Lie type in characteristic~\( p \).
The main result of~\cite{GGLV} bounds the number of composition factors isomorphic to cyclic groups of order~\( p \),
so in order to bound \( c_p(G) \) we may focus on bounding \( \cp(G) \) first.

\begin{proposition}\label{nonabelian}
	There exists a universal constant \( C \) such that the following holds.
	Let $q$ be a power of a prime $p$ and let $V$ be a finite vector space of
	dimension $n$ over the field of size $q$. If $H$ is a subgroup of $\mathrm{GL}(V)$
	acting completely reducibly with \( r \) irreducible summands, then
	$$ \cp(H) \leq C \cdot \frac{n - r}{p-1}. $$
\end{proposition}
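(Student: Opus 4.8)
The plan is to argue by induction on $n = \dim_{\mathbb{F}_q} V$, the case $n = 1$ being trivial since then $H$ is abelian. The first move is to reduce to $r = 1$: if $V = V_1 \oplus \cdots \oplus V_r$ with each $V_i$ an irreducible $\mathbb{F}_q H$-module and $r \geq 2$, and $H_i$ denotes the image of $H$ in $\mathrm{GL}(V_i)$, then $H$ is a subdirect subgroup of $H_1 \times \cdots \times H_r$. Since $\cp$ is additive along composition series, it is monotone under normal subgroups and quotients, and a short induction on $r$ exploiting this (a Goursat-type argument, valid because the $H_i$ are the images of $H$) gives $\cp(H) \leq \sum_{i=1}^r \cp(H_i)$; as each $H_i$ is irreducible on a space of dimension $\dim V_i < n$, the inductive hypothesis yields $\cp(H) \leq \sum_i C(\dim V_i - 1)/(p-1) = C(n-r)/(p-1)$. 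From now on $H$ is irreducible, and replacing $\mathbb{F}_q$ by $\mathrm{End}_{\mathbb{F}_q H}(V)$ — which only shrinks the dimension — we may assume $\mathrm{End}_{\mathbb{F}_q H}(V) = \mathbb{F}_q$.

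Next I would run the standard structural reductions for an irreducible linear group, using Aschbacher's theorem (or, equivalently, the shape of $F^*(H)$ on $V$). In each of the imprimitive, field-extension/semilinear, tensor-product, and tensor-induced cases, $H$ has a normal subgroup acting on one or more spaces of dimension strictly less than $n$, with the quotient ``glue'' being cyclic or a subgroup of a symmetric group $\mathrm{Sym}(t)$; since $\cp$ of such a quotient is at most $v_p(t!) \leq (t-1)/(p-1)$ by Legendre's formula, and since $\sum_i (n_i - 1) \leq \prod_i n_i - 1$ for integers $n_i \geq 1$, the inductive hypothesis closes these cases with room to spare for the glue (in the relevant cases $t \leq \log_2 n$, while $n - 1 - \sum_i(n_i-1)$ is comparably large). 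What remains is the case where $F^*(H)$ acts homogeneously: then $F(H)$ is of symplectic type, $E(H) = L_1 \ast \cdots \ast L_k$ is a central product of quasisimple groups, and $V$ restricted to $F^*(H)$ is a multiple of a tensor product of a module on which $F(H)$ acts, of dimension $\prod_\ell \ell^{a_\ell}$ (where $F(H)_\ell / Z(F(H)_\ell)$ is elementary abelian of rank $2a_\ell$), with modules $W_i$ of dimension $d_i \geq 2$ on which $L_i$ acts projectively irreducibly. In particular $n \geq (\prod_\ell \ell^{a_\ell}) \cdot \prod_i d_i$ and $n \geq 2^k$.

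For this remaining case I would account for the relevant composition factors as follows. The nonabelian composition factors of $F^*(H)$ are exactly the $L_i/Z(L_i)$, so $\cp(F^*(H)) = \sum_i \cp(L_i) \leq \sum_i C_0 (d_i - 1)/(p-1)$ by Proposition~\ref{simpbound}, the $i$-th term vanishing when $L_i/Z(L_i)$ is of Lie type in characteristic $p$. The image of $H$ in $\mathrm{Out}(F^*(H))$ contributes the rest: the outer automorphism groups of the $L_i$ are solvable by Schreier's conjecture and contribute nothing; the subgroups permuting the $L_i$ within an $H$-orbit lie in symmetric groups and contribute at most $(\log_2 n)/(p-1)$ in total; and the outer automorphisms of the symplectic-type component $F(H)_\ell$ lie in a conformal symplectic group whose only nonabelian composition factor $\mathrm{PSp}_{2a_\ell}(\ell)$ is of Lie type in characteristic $\ell \neq p$, with $v_p(|\mathrm{PSp}_{2a_\ell}(\ell)|) \leq C'(\ell^{a_\ell} - 1)/(p-1)$ — this follows from Lemma~\ref{classical} together with the factorization $\ell^{2i} - 1 = (\ell^i-1)(\ell^i+1)$ (which also gives $p \leq \ell^{a_\ell} + 1$), or directly from Proposition~\ref{simpbound} applied to $\mathrm{Sp}_{2a_\ell}(\ell)$, whose minimal faithful projective representation in characteristic $p \neq \ell$ has dimension at least $(\ell^{a_\ell}-1)/2$. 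Summing and using $\prod_s x_s - 1 \geq \sum_s (x_s - 1)$ for the dimensions $d_i$ and $\ell^{a_\ell}$, one gets $\cp(H) \leq \bigl(\max(C_0, C')(n-1) + O(\log n)\bigr)/(p-1) \leq C(n-1)/(p-1)$ for a suitable universal $C$.

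The hard part will be the material of the previous two paragraphs: making the $F^*(H)$ (equivalently Aschbacher-class) analysis precise for irreducible groups over the non-algebraically-closed field $\mathbb{F}_q$, in particular verifying that the homogeneous case genuinely presents $V$ as a tensor product with the factor dimensions multiplying. A related subtlety is that $\cp$ is \emph{not} monotone under arbitrary subgroups, so when bounding the image of $H$ in $\mathrm{Out}(F^*(H))$ one must filter through its normal structure and use the crude estimate $\cp(K) \leq v_p(|K|)$ only on pieces small enough to afford it. The remaining work is bookkeeping: checking via Clifford's theorem that every group arising in a reduction still acts completely reducibly, and verifying that the $O(\log n)$ glue terms and the linear-in-$k$ contributions of the components fit under $C(n-1)/(p-1)$ with a single universal constant — which works precisely because dimensions multiply while the harmful quantities only add.
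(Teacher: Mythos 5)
Your proposal is correct and follows essentially the same path as the paper's own proof: reduce to the irreducible, absolutely irreducible, primitive, non-field-extension case (the paper does this via a minimal counterexample, Steps~1--5, rather than an explicit induction on $n$), then analyze the generalized Fitting subgroup — the paper phrases this via minimal noncentral normal subgroups $J_i$ following~\cite{GMP} rather than Aschbacher classes, but the content (components, symplectic-type piece, $\mathrm{Out}$ via Schreier and $\mathrm{Sp}_{2a}$, dimensions multiplying while contributions add) is the same — and close with Proposition~\ref{simpbound}. The subtleties you flag at the end (non-monotonicity of $\cp$ under arbitrary subgroups, filtering $\mathrm{Out}(F^*(H))$ through its solvable radical, Clifford's theorem preserving complete reducibility) are exactly the points the paper addresses in Step~6.
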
	
\begin{proof}
	Let $H \leq \mathrm{GL}(V)$ be a counterexample to the statement of the theorem with $n \geq 2$ minimal.
	Under this condition, assume that $|H|$ is as small as possible.
	The proof proceeds in several steps; we choose the constant \( C = \max \{ 20/3,\, C_1 \} \), where \( C_1 \) is the constant~\( C \) from Proposition~\ref{simpbound}.

	\noindent\textbf{Step 1: \( H \) acts irreducibly on \( V \).}
	Assume that $W$ is a nonzero proper irreducible submodule of $V$. Let $K$ be the centralizer of $W$ in $H$. The factor
	group $H/K$ acts irreducibly and faithfully on $W$. Thus $ \cp(H/K) \leq C \cdot \frac{m - 1}{p-1} $
	where $m$ is the dimension of $W$ over the field of size $q$.
	Since $H$ acts completely reducibly on $V$, there exists a submodule $U$ of
	$V$ such that $V = W \oplus U$. The group $K$ acts faithfully on $U$. Since $K$ is
	normal in $H$, it acts completely reducibly on $U$ by Clifford's theorem. By the
	minimality of $n$ again, we have $ \cp(K) \leq C \cdot \frac{(n-m) - (r-1)}{p-1} $.
	These give
	$$ \cp(H) = \cp(H/K) + \cp(K) \leq C \cdot \frac{m - 1}{p-1} + C \cdot \frac{(n-m) - (r-1)}{p-1} = C \cdot \frac{n - r}{p-1}, $$
	a contradiction to the minimality of~\( H \).

	\noindent\textbf{Step 2: \( H \) is perfect.}
	Since \( H \) acts irreducibly on $V$, its derived subgroup \( [H, H] \) acts completely reducibly.
	Now, \( \cp(H) = \cp([H, H]) \) and we may assume that \( H = [H, H] \) by the minimality of \( |H| \).

	\noindent\textbf{Step 3: \( H \) acts primitively on \( V \).}
	Assume that $H$ acts imprimitively on $V$, that is, $H$ preserves a decomposition $V = V_{1} + \ldots + V_t$ of the vector space
	$V$ to (proper) subspaces $V_{i}$ of the same size where $1 \leq i \leq t$ for some integer $t > 1$.
	Let the kernel of the action of $H$ on $\{ V_{1}, \ldots , V_{t} \}$ be $B$.
	We have $ \cp(H/B) \leq (t-1)/(p-1)$ by considering the $p$-part of $t!$. Since $B$ is a proper normal subgroup of $H$, we have $ \cp(B) \leq C \cdot \frac{n - t}{p-1} $. These give
	$$ \cp(H) = \cp(H/B) + \cp(B) \leq \frac{t-1}{p-1} + C \cdot \frac{n - t}{p-1} \leq C \cdot \frac{n - 1}{p-1},$$
	where $C \geq 1$ is used. 

	\noindent\textbf{Step 4: \( H \) acts absolutely irreducibly on \( V \).}
	Let $E = \mathrm{End}_{H}(V)$. This is a field extension of the field of order $q$. Let the order of $E$ be $q^{e}$.
	The group $H$ may be viewed as a subgroup of $\mathrm{GL}(V)$ where $V$ is the
	vector space of dimension $n/e$ over the field $E$. The $EH$-module $V$ remains
	irreducible. Let $e > 1$. The minimality of $n$ gives
	$$ \cp(H) \leq C \cdot \frac{n/e - 1}{p-1} < C \cdot \frac{n - 1}{p-1}. $$
	A contradiction. 

	\noindent\textbf{Step 5: \( H \) does not preserve any proper field extension.}
	Assume that $H$ preserves a field extension structure on $V$ over the field of order $q^e$ for some $e > 1$. The group $H$ may
	be embedded in $\mathrm{GL}(n/e,q^{e}).e$ and since \( H \) is perfect, \( H \) lies in \( \mathrm{GL}(n/e, q^e) \).
	By the argument in~\cite[p. 1028]{GMP}, the group $H$ acts irreducibly (and faithfully) on $V$ viewed as a vector space of dimension $n/e$
	over the field with $q^e$ elements. These give
	$$ \cp(H) \leq C \cdot \frac{n/e - 1}{p-1} < C \cdot \frac{n - 1}{p-1}. $$
	A contradiction.   

	\noindent\textbf{Step 6: The group $H$ is quasisimple.}
	By the argument in~\cite[p. 1029]{GMP}, for every normal subgroup $R$ of $H$ every irreducible constituent of the $R$-module $V$ is absolutely irreducible.

	Since $H$ acts primitively on $V$, every normal subgroup of $H$ acts homogeneously on $V$ by Clifford's theorem.
	In particular, every abelian normal subgroup of $H$ is cyclic by Schur's lemma and is central by the previous paragraph.

	Let $R$ be a normal subgroup of $H$ minimal subject to being noncentral. The center $Z(R)$ of $R$
	is contained in $Z(H)$ and $R/Z(R)$ is characteristically simple. As in the proof
	of~\cite[Theorem 4.1]{GMP}, the group $R$ is either a central product of say $t$
	quasisimple groups $Q_i$ (with the $Q_{i}/Z(Q_{i})$ all isomorphic) or $R/Z(R)$ is
	an elementary abelian $r$-group for some prime $r$. In the second case $R$ is an
	$r$-group with $r$ different from $p$ and it may be proved that $R$ is of
	symplectic type with $|R/Z(R)| = r^{2a}$ for some integer $a$. 

	We follow the proof of~\cite[Theorem 4.1]{GMP} and introduce some notation.
	Let $J_{1}, \ldots , J_{k}$ denote the distinct normal subgroups of $H$ that are minimal with respect to being
	noncentral in $H$. Let $J = J_{1} \cdots J_{k}$ be the central product of these
	subgroups. Let $W$ be an irreducible constituent of the $J$-module $V$. Then $W =
	U_{1} \otimes \cdots \otimes U_{k}$ where $U_i$ is an irreducible $J_{i}$-module.
	If $J_i$ is the central product of $t$ copies of a quasisimple group, then
	$\mathrm{dim}(U_{i}) \geq 2^t$ and if $J_i$ is of symplectic type with
	$J_{i}/Z(J_{i})$ of order $r^{2a}$, then $\mathrm{dim}(U_{i}) = r^a$. 

	The group $H/(Z(H)J)$ embeds into the direct product of the outer automorphism groups of
	the $J_{i}$. Let $J_i$ be a central product of say $t$ quasisimple groups $Q$. The outer
	automorphism group $\mathrm{Out}(J_{i})$ in this case may be viewed as a subgroup of
	$\mathrm{Out}(Q/Z(Q)) \wr \mathrm{Sym}(t)$. Since $\mathrm{Out}(Q/Z(Q))$ is solvable by
	Schreier's conjecture,
	$$ v_{p}( | \mathrm{Out}(J_{i}) / \mathrm{Sol}( \mathrm{Out}(J_{i}) ) | ) \leq v_p(|\mathrm{Sym}(t)|) \leq \frac{t-1}{p-1},$$
	where $\mathrm{Sol}(X)$ denotes the solvable radical of a finite group $X$.
	Now let $J_i$ be a group of symplectic type with $|J_{i}/Z(J_{i})| = r^{2a}$ for some prime $r$ and integer $a$.
	In this case $\mathrm{Out}(J_{i})$ may be viewed as a subgroup of $\mathrm{Sp}_{2a}(r)$ and so
	$$ v_{p}(|\mathrm{Out}(J_{i})|) \leq v_p(|\mathrm{Sp}_{2a}(r)|),$$ which is at most $\frac{(4/3)r^{a}-1}{p-1}$ by~\cite[(3)]{GGLV}.

	Since $n = \dim(V) \geq \dim(W) = \prod_{i} \dim(U_{i}) \geq \sum_{i} \dim(U_{i})$, we have
	\begin{equation}
	\label{e1}	
	\cp(H/(Z(H)J)) \leq \frac{(4/3)n-1}{p-1} \leq \frac{5}{3} \cdot \frac{n-1}{p-1}
	\end{equation}
	by the previous paragraph and the fact that \( n \geq 2 \).

	We claim that exactly one of the $J_{i}$ is nonsolvable with a nonabelian composition
	factor of order divisible by $p$ but different from a group of Lie type in characteristic~$p$.
	Suppose otherwise. If there is no such $J_i$, then $\cp(Z(H)J) = 0$ and so
	\begin{equation}
	\label{e2}
	\cp(H) \leq \cp(H/(Z(H)J)) + \cp(Z(H)J) \leq \frac{5}{3} \cdot \frac{n-1}{p-1} < C \cdot \frac{n-1}{p-1}, 
	\end{equation}
	by (\ref{e1}) and the fact that $C \geq 5/3$, a contradiction. Let the number of such $J_i$ be $m > 1$. Without
	loss of generality, let these be $J_{1}, \ldots , J_{m}$. We have $ \cp(Z(H)J) = \sum_{i=1}^{m} \cp(J_{i})$.
	For each $i$ with $1 \leq i \leq k$, let $\mathrm{dim}(U_{i}) = n_i$. By the minimality of $n$, we find that
	$$\sum_{i=1}^{m} \cp(J_{i}) \leq C \cdot \frac{(\sum_{i=1}^{m} n_{i}) - m}{p-1}.$$
	If \( m \geq 3 \) or $m=2$ and $\max \{ n_{1}, n_{2} \} \geq 4$, then \( \sum_{i=1}^{m} n_i \leq \frac{3}{4} \prod_{i=1}^{m} n_i \leq \frac{3}{4}n \), hence
	\[ \cp(H) = \cp(Z(H)J) + \cp(H/(Z(H)J)) \leq \frac{3C}{4} \cdot \frac{n - 1}{p-1} + \frac{5}{3} \cdot \frac{n - 1}{p-1} \leq C \cdot \frac{n - 1}{p-1}, \]
	where the last inequality holds since \( C \geq 20/3 \). A contradiction.

	If \( m = 2 \) and \( \max \{ n_{1}, n_{2}\} \leq 3 \), then \( \cp(\mathrm{Out}(J_i)) = 0 \) for \( i = 1,2 \) and hence
	\[ \cp(H/(Z(H)J)) \leq \frac{(4/3)\sum_{i = 3}^k n_i - 1}{p-1}, \]
	so
	\[ \cp(H) \leq C \cdot \frac{(n_1 + n_2) - 1}{p-1} + \frac{(4/3)\sum_{i = 3}^k n_i - 1}{p-1} \leq C \cdot \frac{n - 1}{p-1}, \] by the minimality of $n$,
	where the last inequality holds since \( C \geq 4/3 \). A contradiction. We thus have \( m = 1 \).

	We claim that $k = 1$. Assume that $k \geq 2$. By the previous paragraph and without loss
	of generality, $\cp(J_{1}) \geq 1$ and $\cp(J_{i}) = 0$ for every $i$ with $2 \leq i \leq k$.
	By the minimality of $n$ and the fact that $k \geq 2$ and \( n_2 \geq 2 \), we have
	$$ \cp(Z(H)J) = \cp(J_{1}) \leq C \cdot \frac{n_1 - 1}{p-1} \leq \frac{C}{2} \cdot \frac{n - 1}{p-1}. $$
	This together with the bound (\ref{e1}) and \( C \geq 10/3 \) give $\cp(H) < C \cdot \frac{n-1}{p-1}$, a contradiction.

	The group $J = J_{1}$ is a central product of say $t$ quasisimple groups $Q_i$ (with the $Q_{i}/Z(Q_{i})$ all isomorphic).
	We claim that $t=1$. Assume for a contradiction that $t \geq 2$. Let $W$ be an irreducible constituent of the $J$-module $V$.
	Then $W = W_{1} \otimes \cdots \otimes W_{t}$ where $W_i$ is an irreducible $Q_{i}$-module for every $i$ with $1 \leq i \leq t$ by~\cite[Lemmas 5.5.5 and 2.10.1]{KL}.
	For each $i$ with $1 \leq i \leq t$, let $m_{i}$ be $\dim(U_{i}) \geq 2$. We have $n \geq \prod_{i=1}^{t} m_{i} \geq \sum_{i=1}^{t} m_{i}$.
	We get $\cp(J) \leq C \cdot \frac{n - t}{p-1}$ by the minimality of $n$ and $\cp( H/(Z(H)J)) \leq \frac{t - 1}{p-1}$ by Schreier's conjecture.
	This is a contradiction since $C \geq 1$. We conclude that $t=1$.    

	Since $H$ is perfect, $H = JZ(H)$ and so $H = J$ is quasisimple.
	Since $H$ acts absolutely irreducibly on \( V \) and is quasisimple, the final contradiction follows from Proposition~\ref{simpbound}.
\end{proof}

\section{Proofs of the main results}

\noindent {\it Proof of Theorem~\ref{main}.}
Let $V$ be a finite vector space of dimension $n$ over the field of size $q$. Let $H$ be a subgroup of $\mathrm{GL}(V)$ acting completely reducibly on $V$. Let $r$ be the number of
irreducible summands of the $H$-module $V$. We claim that $c_{p}(H) \leq C \cdot \frac{n - r}{p-1}$ for some universal constant $C$. 

We prove the bound by induction on $n$. If $n=1$, then the size of $H$ is not divisible by $p$ and so $c_{p}(H) = 0$.
Assume that $n \geq 2$ and that the claim is true for $n-1$. If the $H$-module $V$ contains an irreducible summand $W$ of dimension $1$
and $K$ denotes the centralizer of $W$ in $H$, then $c_{p}(H) = c_{p}(K) \leq C \cdot \frac{(n-1) - (r-1)}{p-1}$ by the induction hypothesis.
We may assume that every submodule of $V$ has dimension at least $2$. In particular, $r \leq n/2$.
The number of composition factors of $H$ isomorphic to the cyclic group of order $p$ is at most $((4/3)n - r)/(p-1)$ by~\cite[Theorem 1]{GGLV}.
This is at most $\frac{8}{3} \frac{n-r}{p-1}$ since $r \leq n/2$. Thus
$$c_{p}(H) \leq \frac{8}{3} \frac{n-r}{p-1} + \cp(H) \leq C \cdot \frac{n-r}{p-1},$$
where $C$ is $8/3$ plus a constant whose existence is assured by Proposition~\ref{nonabelian}. \qed

\noindent {\it Proof of Corollary~\ref{cor}.}
Let $C$ be a constant whose existence is assured by Theorem~\ref{main}. Let $G$ be an affine
primitive permutation group of degree $p^{n}$ where $p$ is a prime and $n$ is an integer.
Let $H$ be a point-stabilizer in $G$ satisfying $c_{p}(H) \geq 1$. The diameter of any nondiagonal orbital
graph of $G$ is at most $(p-1)n$ by~\cite[Proposition 3.2]{MS}. On the other hand, $p-1 \leq C (n-1)/c_{p}(H)$ by Theorem~\ref{main}. \qed


\begin{thebibliography}{30}
	\bibitem{Artin} E.~Artin, The orders of the classical simple groups. \emph{Comm. Pure Appl. Math.} \textbf{8} (1955), 455--472. 

	\bibitem{Buen} F.~Buekenhout, Good contributors to the order of the finite simple groups. \emph{Arch. Math.} \textbf{44} (1985), 289--296.
	
	\bibitem{GGLV} M. Giudici, S. P. Glasby, C. H. Li, G. Verret, The number of composition factors of order $p$ in completely reducible groups of characteristic $p$.
		\emph{J. Algebra} \textbf{490} (2017), 241--255. 

	\bibitem{GPRV} S. P. Glasby, C. E. Praeger, K. Rosa, G. Verret, Bounding the composition length of primitive permutation groups and completely reducible linear groups.
		\emph{J. Lond. Math. Soc.} (2) \textbf{98} (2018), no. 3, 557--572.

	\bibitem{GMP} R. M. Guralnick, A. Mar\'oti, L. Pyber, Normalizers of primitive permutation groups.
		\emph{Adv. Math.} \textbf{310} (2017), 1017--1063.	

	\bibitem{Higman} D. G. Higman, Intersection matrices for finite permutation groups.
\emph{J. Algebra} \textbf{6} (1967) 22--42.

\bibitem{HT} D. Holt, G. Tracey, Comparing the order and the minimal number of generators of a finite irreducible linear group. \emph{J. Algebra} \textbf{603} (2022), 260--280. 

	\bibitem{KL} P. Kleidman, M. Liebeck, The subgroup structure of the finite classical groups.
		London Mathematical Society Lecture Note Series, 129. Cambridge University Press, Cambridge, 1990.	 

	\bibitem{KR} L. G. Kov\'acs, G. R. Robinson, On the number of conjugacy classes of a finite group.
		\emph{J. Algebra} \textbf{160} (1993), no. 2, 441--460.

	\bibitem{LandazuriSeitz} V. Landazuri, G. M. Seitz, On the minimal degrees of projective representations of the finite Chevalley groups. \emph{J. Algebra} \textbf{32}
		(1974), 418--443.

	\bibitem{LMT} M. W. Liebeck, D. Macpherson, K. Tent, Primitive permutation groups of bounded orbital diameter. \emph{Proc. Lond. Math. Soc.} (3) \textbf{100} (2010), no. 1, 216--248.

	\bibitem{MS} A. Mar\'oti, S. V. Skresanov, Bounds for the diameters of orbital graphs of affine groups. \emph{Vietnam J. Math} (2023), doi:10.1007/s10013-023-00607-5.

	\bibitem{MSim} A.~Mar\'oti, I.I.~Simion, Bounding the number of classes of a finite group in terms of a prime. \emph{J. Group Theory} \textbf{23} (2020). no.~3, 471--488.

	\bibitem{Re} K. Rekv\'enyi, On the orbital diameter of groups of diagonal type. \emph{J. Combin. Theory Ser. A} \textbf{190} (2022), Paper No. 105636. 

	\bibitem{Robinson} G. R. Robinson, Bounding the order of complex linear groups and permutation groups with selected composition factors. arXiv:2303.06011v2.

	\bibitem{S} P. Schmid, The solution of the $k(GV)$ problem. ICP Advanced Texts in Mathematics, 4. Imperial College Press, London, 2007.

	\bibitem{Sh} A. Sheikh, Orbital diameters of the symmetric and alternating groups.
		\emph{J. Algebraic Combin.} \textbf{45} (2017), no. 1, 1--32. 

	\bibitem{Saveliy} S.V. Skresanov, On a polynomial bound for the orbital diameter of primitive affine groups, arXiv:2305.03460.
\end{thebibliography}
\end{document}